\title{Proof Search Algorithm in Pure Logical Framework}
\author{Dmitry Vlasov\inst{1}}
\institute{Sobolev Institute of Mathematics, SB RAS \email{vlasov@math.nsc.ru} 
\footnote{The research is partially supported by Russian Foundation for Basic Researches (Grant No. 17-01-00531)}}
\begin{document}

\maketitle

\begin{abstract}
By a pure logical framework we mean a framework which does not rely on any
particular formal calculus. For example, Metamath \cite{Metamath} is an instance of 
a pure logical framework. Another example is the Russell system
(https://github.com/dmitry-vlasov/russell). In this paper, we describe the proof 
search algorithm used in Russell. The algorithm is proved to be correct and complete, i.e.
it gives only valid proofs and any valid proof can be found (up to a
substitution) by the proposed algorithm.
\end{abstract}

\section{Introduction}

Historically, there are several approaches to automated reasoning for logical frameworks.
The most popular is LCF \cite{LCF}, where the proofs are generated by programs (tactics), and a user
is responsible for programming these tactics and using them in the process of proving a
statement \cite{J_Harrison} - such systems as HOL, Isabelle, etc. use this approach. 
The other approach was invented and was explored by S. Maslov \cite{Maslov64} - so called 
inverse method. It was found, that this method may be applied to various calculi, which
satisfy some good properties \cite{Maslov68} and about that time attempts to apply this 
method to computer programs \cite{comp_deduct} were held.

The algorithm, which will be discussed in this paper has nothing in common with LCF
methodology, but is shares specific features with the inverse method. Although, strictly
speaking, this algorithm is of bottom-up kind (see classification of
algorithms in \cite{J_Harrison}, p.172), it has a top-down component, which, in turn,
shares some common features with the inverse method. The other method, which has something 
common with the proposed algorithm is Prawitz method \cite{Prawitz}. Namely the idea,
which is common in all three methods: Prawitz, inverse and proposed in this paper, is
that we should look for a special substitution, which will make different parts of 
inference (whatever we mean by this word) compatible with each other, while going 'downwards', 
from premises to goal, and this substitution may be computed. The resolution method
\cite{Resolution} also uses analogical idea (namely unification), but it tries to unify 
positive and negative entries of a proposition, therefore searching for an inconsistency,
instead of compatibility.

And what is different between 
top-down approach, discussed in \cite{J_Harrison}, and method presented in this paper,
is that here top-down pass has local character, i.e. it affects only a currently
observed inference transition. This helps to make it efficient, although its efficient 
implementation is far from trivial.

\section{Inference in a Pure Logical Framework}
\subsection{Language and Deductive System}
Let's consider a definition of some deductive system $\mathcal{D}$.
First of all, let's fix a 
context free unambiguous grammar $G$ for a language of expressions $L(G)$ with set of non-terminals $N$. 
For each $n\in N$ let's designate as $G_n$ a grammar, obtained from $G$
by changing the start terminal to $n$. We need these grammars because the formalized
calculus may exploit expressions from different $G_n$ languages. To show the
fact, that an expression $e$ belongs to $G_n$ language we'll use notation $e:n$.
We suppose, that for each non-terminal $n$ there's an infinite set of terminal symbols $v$,
such that the rule $n\rightarrow v$ is in $G$, so that $v:n$. We'll call symbol $v$ a \textit{variable} of type $n$.

The \textit{assertion} has a form $a=\frac{p_1,\ldots, p_n}{p_0}$ where $p_1,\ldots,p_n$ are 
\textit{premises} and $p_0$ is a \textit{proposition} of $a$. The \textit{deductive system} 
$\mathcal{D}$ is a pair:  $\mathcal{D}=(G,A)$ where $G$ is a grammar of expressions and $A$ 
is a set of axiomatic assertions of $\mathcal{D}$.

\subsection{Unification}
A mapping $v:n \stackrel{\theta}{\mapsto} e:m$ is called a \textit{substitution}, iff the rule
$n\rightarrow m$ is derivable in grammar $G$. This means that in grammar $G$ you may substitute 
non-terminal $n$ with non-terminal $m$, and, therefore, with the expression $e$. The classical
example of such relation is class-set relation in set theory with classes: you can substitute
a class-typed variable with an expressions of type set (and you cannot do it in the other direction).

Application of a substitution $\theta$ to an expression $e$ is straightforward and is designated as $\theta(e)$.
Note, that application of substitution demands a conformity of variable types. Further
we will assume, that all application of substitution are correct.
By definition of substitution, if $e$ is an expression from $L(G)$, and $\theta$ is applicable 
to $e$ then $\theta(e)$ also stays in $L(G)$ - this follows from the context-freeness of $G$. 
The application of substitution to a complex object 
(like assertion or proof) is understood component-wise. 
There is a natural notion of \textit{composition} of substitutions, which we will
designate as $(\theta\circ\eta)(e)=\theta(\eta(e))$.

An expression $e_1$ \textit{unifies} with an expression $e_2$, iff there is a substitution $\theta$
such that $e_2=\theta(e_1)$. Note, that here unification is non-symmetric, e.g. it may happen that
$e_1$ is unifiable with $e_2$, but $e_2$ does not unify with $e_1$. The reason 
for this is in the directed nature of inference in our case. The consequence of such non-symmetry is
that if a unifier is exists, it is unique. 

\subsection{Inference}
A \textit{proof tree} in a deduction system $\mathcal{D}=(G,A)$ is a tree, which nodes are labeled: 
of odd depth with assertions from $A$ (a-nodes) and of even depth with expressions from $L(G)$ (e-nodes) and 
any e-node has at most one descendant a-node. Also we demand that leafs must be only e-nodes and 
a-nodes are labeled with substitutions, so actually a label of an a-node is a pair: $(a, \theta)$. Note, that
here we don't demand validity: proof trees may be invalid.

Two proof trees $T_1$ and $T_2$ are \textit{congruent}, iff their graph structures are isomorphic, corresponding 
a-nodes have the same assertions as labels, and no restrictions on e-nodes. Let's designate this relation as $T_1\sim T_2$.

Given two proof trees $T_1$ and $T_2$, we say that $T_1$ is \textit{more general} then $T_2$ (respectively,
$T_2$ is \textit{less general} then $T_1$), iff there exist such substitution $\delta$,
that $T_2 = \delta(T_1)$ (recall, that application of substitution to complex structures is
component-wise). Let's designate this relation as $T_1\succeq T_2$. From the definition it immediately
follows, that $T_1\succeq T_2$ implies that $T_1\sim T_2$.

A proof tree is a \textit{proof}, iff for any transition from e-nodes $e_1,\ldots,e_m$ 
via a-node $(a, \theta)$ to e-node $e_0$, we have that $\frac{e_1,\ldots.e_n}{e_0}$ 
is equal to $\theta(a)$. We'll call the unifier $\theta$ a \textit{witness} of this transition.

A proof tree is a \textit{proof of a statement} $s=\frac{p_1,\ldots, p_n}{p_0}$
iff it is a proof, its root is equal to $p_0$, and each leaf $e$ coincides with some premise $p_i$, $i\leq n$. 

\begin{lemma}[Monotonicity]
Let $\pi$ be a proof of an assertion $a=\frac{p_1,\ldots,p_n}{p_0}$ and $\theta$ - any substitution of variables,
which occur in $a$. Then $\theta(\pi)$ would be the proof of $\theta(a)$. 
\end{lemma}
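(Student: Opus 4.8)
The plan is to show directly that applying $\theta$ component-wise to $\pi$ preserves every clause in the definition of ``proof of a statement,'' using only the fact that substitution commutes with the assertion-forming operation $\frac{\cdot}{\cdot}$ and that substitutions compose associatively. No genuine induction is needed: the proof condition is local to each transition, and the root and leaf conditions live on the boundary, so a transition-by-transition verification suffices.

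First I would check that $\theta(\pi)$ is still a well-formed proof tree. Component-wise application does not touch the underlying graph, so the tree shape, the alternation of e-nodes and a-nodes by depth, and the requirement that leaves be e-nodes are all inherited from $\pi$. Each e-node $e$ becomes $\theta(e)$, which by the context-freeness of $G$ noted earlier again lies in $L(G)$; each a-node label $(a_i,\theta_i)$ retains its assertion $a_i$ and has its witness replaced by $\theta\circ\theta_i$. Here I would remark that although $\theta$ is given only on the variables of $a$, it acts as the identity on any auxiliary variables occurring inside $\pi$, so $\theta(\pi)$ is defined, and by the standing assumption that all applications are correct these respect type conformity. In particular $\theta(\pi)\sim\pi$, so corresponding a-nodes carry the same assertions.

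Next I would verify the local proof condition at an arbitrary transition. Suppose in $\pi$ the e-nodes $e_1,\ldots,e_m$ pass through the a-node $(a_i,\theta_i)$ to the e-node $e_0$, so that $\frac{e_1,\ldots,e_m}{e_0}=\theta_i(a_i)$. Applying $\theta$ to both sides and using that substitution acts on an assertion component-wise, the left-hand side becomes $\frac{\theta(e_1),\ldots,\theta(e_m)}{\theta(e_0)}$, while the right-hand side becomes $\theta(\theta_i(a_i))=(\theta\circ\theta_i)(a_i)$ by the definition of composition. This is precisely the proof condition for the corresponding transition of $\theta(\pi)$ with the new witness $\theta\circ\theta_i$, so $\theta(\pi)$ is again a proof. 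Finally I would read off the boundary conditions: the root of $\theta(\pi)$ is $\theta$ applied to the root of $\pi$, namely $\theta(p_0)$, the proposition of $\theta(a)$; and each leaf of $\theta(\pi)$ is $\theta(e)$ for a leaf $e=p_i$ of $\pi$, hence equals $\theta(p_i)$, a premise of $\theta(a)$. Thus $\theta(\pi)$ is a proof of $\theta(a)$.

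I expect the only delicate point to be the bookkeeping for the witnesses: confirming that replacing each $\theta_i$ by $\theta\circ\theta_i$ is exactly the component-wise action of $\theta$ on an a-node, and that the composed map remains a legal substitution in the sense of the grammar (derivability of the relevant non-terminal rules). Once this is pinned down and the identity action on auxiliary variables is justified, the remaining verifications are routine applications of the commutation of $\theta$ with $\frac{\cdot}{\cdot}$.
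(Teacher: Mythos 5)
Your proposal is correct and takes essentially the same approach as the paper: the key observation in both is that each transition with witness $\eta$ remains valid under $\theta$ with the composed witness $\theta\circ\eta$. You simply spell out the well-formedness and root/leaf boundary conditions that the paper leaves implicit.
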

\begin{proof}
It is sufficient to notice, that application of a substitution to each proof transition keeps unifiability: if $e_0$
is deduced from $e_1,\ldots,e_n$ in the proof $\pi$ with assertion $b$ and unifier $\eta$ is its witness, then
$\theta(e_0)$ is deduced from $\theta(e_1),\ldots,\theta(e_n)$ with the same $a$ and unifier $\theta\circ\eta$ 
is its witness. 
$\square$
\end{proof}

The notion of a deductive system, presented here is very close to the notion of canonical deductive system
given by Post \cite{Post}. The difference is in the treatment of expressions: Post's canonical system
don't have any restrictions on the language: we may substitute variables with any words. Here
we restrict the language to context-free unambiguous class, so that efficient unification algorithms are
possible. Also, monotonicity property, which is demanded for the proof of correctness, is proved only for 
the context-free grammars. Unambiguity of grammar is necessary for the uniqueness of a unifier.

\section{Proof Search Algorithm}
Suppose that we have a deductive system $\mathcal{D}$ and we want to answer the question:
is some particular statement $s$ provable in $\mathcal{D}$? Let's note, that there's no 
symmetry in asking for provability and non-provability is in general case, because it is 
possible to show provability by giving actual proof and checking it, but it is not possible
to assert that something is not provable - we might not have a negation in the deductive system
(once more let's stress here that we are speaking about the general case, for particular decidable 
calculi this is wrong).

From the general considerations, while searching for a proof for the statement $s=\frac{p_1,\ldots, p_n}{p_0}$ 
we may follow different strategies:
\begin{enumerate}
\item start with premises $p_1,\ldots,p_n$ make various inferences and try to obtain the goal $p_0$ (downwards approach)
\item start with the goal $p_0$ look for all possible ways how it can be obtained in $\mathcal{D}$, get 
the sub-goals $q_1,\ldots,q_m$ and do the same for them, until we come up to premises (upwards approach)
\end{enumerate}

The outer loop of the algorithm uses the second variant - upwards search, from goal to premises, but inside of it there is a top-down
loop. So, in a very general sense, the proposed algorithm uses both modes of traversal: bottom-up and top-down, 
but they are not equal and play different roles. Namely, upwards pass is a traversal of possible variants to derive a goal,
while downwards pass is a quest for valid consequences of premises which uses the structure of a tree, which is built during
the upwards pass. 
When we reach the root on the downwards pass,
then the considered statement is proved. Summarizing the above, the proposed algorithm uses a mixture of top-down 
and bottom-up strategies.

\subsection{Proof Variant Tree}
The proof search algorithm essentially is building a tree of proof variants - so called \textit{proof variant tree} 
(PVT). The completeness of the algorithm is guaranteed by the completeness of the tree of variants. The PVT nodes 
are marked with expressions (nodes of even depth) and assertions (nodes with odd depth) - just like proof trees. 
The variables of e-nodes are marked up with replaceable/non-replaceable flags. It is necessary
because some variables are passed from the statement, which is being proved, so they cannot be replaced or
modified, therefore they are marked as non-replaceable; while others come from internal expressions of a proof
and may be substituted with arbitrary expressions. The starting point of the tree building algorithm is a goal expression $p_0$, 
all of its variables are marked as non-replaceable and these variables will stay non-replaceable while
tracing further into the PVT.

Given a node of even depth, which is marked up with an expression $e$, we fork it out with nodes, marked up 
with all assertions $\{a_1,\ldots, a_n\}$ which propositions unify with $e$ with some unifier $\theta$. 
In turn, for each of odd-depth node $a=\frac{q_1,\ldots,q_n}{q_0}$ and appropriate substitution $\theta$, 
its premises $\theta(q_1),\ldots,\theta(q_n)$ form a set of direct descendants of $a$. In some cases there 
may be a collision of variable names at this step, so to avoid it we'll accept an agreement that while
unifying $a$ with $e$, we'll replace all free variables of $a$ with a fresh ones. The binary graph relation 
of precedence in the PVT is designated as $n\succ m$: here $n$ is a direct descendant of $m$.

For any subtree of an PVT we say that it is a \textit{proof variant}, iff any e-node in it has at most
one descendant. Any proof variant $v$ immediately generates a proof tree $\pi(v)$, when we remove
all unrelated data from it. 

\subsection{Substitution Proof Tree}

The nodes in PVT are marked not only by the expressions and assertions. Each node $n$ in PVT
has a set of its \textit{substitution proof trees} (SPT), which is designated as $s(n)$. 
Substitution proof tree $T$ is a proof tree, which nodes are labeled with the nodes of PVT and 
substitutions. The substitution of a root node will be addressed as $\theta(T)$.

Initially, when created, the set of SPT for any node is empty. Let's consider some just created expression 
PVT node $e$. We look at the premises $p_1,\ldots,p_n$ of a statement, which is proved. If some $p_i$ of 
these premises unifies with $e$ (note, that here variables in $e$ are also non-replaceable!), then $e$ is 
trivially provable from $p_i$. So we place the one-node SPT, constructed from the unifier and current 
PVT node, into the set of SPT for this node.

If we find a new SPT node for some expression node, then we try to shift it a step down to the root. 
For this purpose we test all of its siblings (they correspond to the premises of some assertion) for being also proved 
(i.e. the set of SPT is non-empty). If we find, that all siblings of the node are proven, 
we can try to find a SPT node its ancestor. To do it we need a concept of unification of substitutions.

\subsubsection{Unification of Substitutions.}

Given a set of substitutions $\Xi=\{\theta_1,\ldots,\theta_n\}$, we say that a substitution $\delta$ is 
\textit{a unifier} for $\Xi$, iff for all $i,j\leq n$ we have $\delta\circ\theta_i=\delta\circ\theta_j$. 
Here $\circ$ is a composition of substitutions. Unificator $\delta$ is called \textit{most general}, iff 
for any other unifier $\eta$ for the set $\Xi$, there is such $\eta^{\prime}$ that 
$ \eta = \eta^{\prime}\circ\delta $
For each set $\Xi$, if a unifier for $\Xi$ exists, there is a unique 
up to the variable renaming most general unifier, which we will designate as $\mathrm{mgu}(\Xi)$. And the 
common substitution $\mathrm{mgu}(\Xi)\circ\theta_i$ we will designate as $\mathrm{com}(\Xi)$

\subsubsection{Building SPT for Assertion Nodes.}

So, imagine that we have some a-node $a$ in the proof variant tree, and all of its direct descendants
$e_1,\ldots,e_2$ have non-empty sets of SPT $s(e_1),\ldots,s(e_n)$. Then for any tuple of SPT 
$T_1\in s(e_1),\ldots,T_n\in s(e_n)$, if the set of substitutions $\{\theta(T_1),\ldots,\theta(T_n)\}$
is unifiable with $\delta=\mathrm{mgu}(\theta(T_1),\ldots,\theta(T_n))$, then a new SPT $T_0$ with 
substitution $\theta=\mathrm{com}(\theta(T_1),\ldots,\theta(T_n))$ 
is added to $s(a)$. The tree of $T_0$ is obtained as: 
\[   T_0(T_1,\ldots, T_n) = \frac{\delta(T_1)\ \ldots\ \delta(T_n)}{(\theta, a)} \]
Note, that obtained here unifier $\delta$ propagates through the whole trees $T_i$ (applies to
all of its components: expressions and substitutions).
Also, non-replaceable variables cannot be substituted with at this step. So, the set of all SPT for 
the node $a$ will be:
\[s(a) = \{ T_0(T_1,\ldots, T_n) | T_1\in s(e_1), \ldots, T_n\in s(e_n), \exists \mathrm{mgu}(\theta(T_1),\ldots,\theta(T_n)) \}\]

\subsubsection{Building SPT for Expression Nodes.}

The set of SPT for expression node $e$ is updated with update of SPT set of any of its descendants.
For an expression node $e$, if one of its descendants is updated with the SPT $s$, 
then the set of proof-substitutions for $e$ is also updated with a new node, which only descendant is $s$ 
and substitution coincides with the substitution of a descendant:
\[ T_0(T_1) = \frac{T_1}{(\theta, e)} \]
and a set of all SPT for the node $e$ will be:
\[s(e) = \bigcup_{a\succ e}\{T_0(T_1) | T_1\in s(a)\} \]

\begin{lemma}
Each substitution proof tree $T$  defines a unique proof variant $\pi(T)$.
\end{lemma}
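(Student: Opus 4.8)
The plan is to proceed by structural induction on the inductive construction of the SPT $T$, defining $\pi(T)$ as the projection that forgets all substitution data and remembers only, at each SPT node, the underlying PVT node to which it refers. I would then verify that this projection always yields a proof variant, i.e. a subtree of the PVT in which every e-node has at most one descendant, and that it is completely determined by $T$.

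First I would record the key auxiliary observation that $\pi$ is invariant under substitution, namely $\pi(\delta(T)) = \pi(T)$ for every applicable $\delta$. This holds because, as remarked after the SPT construction, a substitution $\delta$ propagates only through the expression and substitution components of the labels, leaving the PVT-node reference attached to each node untouched. This observation is what makes the inductive step for a-nodes go through, since there the children are the $\delta$-transformed trees $\delta(T_1),\ldots,\delta(T_n)$ rather than $T_1,\ldots,T_n$ themselves.

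For the base case, a one-node SPT is created at an expression node $e$ whenever some premise $p_i$ unifies with $e$; here $\pi(T)$ is the single PVT node $e$, a one-element subtree whose only e-node is a leaf, hence vacuously a proof variant. For the inductive step at an a-node, $T = \frac{\delta(T_1)\ \ldots\ \delta(T_n)}{(\theta,a)}$ with $T_i \in s(e_i)$, where $e_1,\ldots,e_n$ are exactly the premises, that is, the direct PVT-descendants, of $a$. By the induction hypothesis together with the invariance observation, each $\pi(\delta(T_i)) = \pi(T_i)$ is a proof variant rooted at $e_i$, so $\pi(T)$ is the tree with root $a$ and children $\pi(T_1),\ldots,\pi(T_n)$. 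Because the children are attached precisely at the PVT-descendants $e_i$ of $a$, this is a legitimate subtree of the PVT, and since the proof-variant condition constrains only e-nodes it is inherited from the $\pi(T_i)$, while the a-node $a$ correctly keeps all $n$ premises. For the inductive step at an e-node, $T = \frac{T_1}{(\theta,e)}$ with $T_1 \in s(a)$ and $a \succ e$; then $\pi(T)$ is rooted at $e$ with the single child $\pi(T_1)$ rooted at $a$, so $e$ acquires exactly one descendant --- which is what the proof-variant condition demands --- and $a \succ e$ guarantees this is a subtree of the PVT.

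Finally, uniqueness follows because the recursion copies the parent--child structure of $T$ verbatim and each SPT node contributes exactly one PVT node, so $T \mapsto \pi(T)$ is single-valued. I expect the main obstacle to be the bookkeeping that $\pi(T)$ genuinely respects the precedence relation $\succ$ of the PVT: in particular, checking that each e-node of $\pi(T)$ retains exactly one of its possibly many PVT children, so that the ``at most one descendant'' clause holds, while each a-node retains all of its premises, together with making the substitution-invariance observation precise enough to justify the a-node step.
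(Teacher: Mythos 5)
Your proof is correct and follows essentially the same route as the paper's: induction on the structure of $T$, with the base case being the one-node SPTs created by unification with a premise and the inductive step resting on the fact that, by construction, every SPT e-node has at most one direct descendant. You supply considerably more detail than the paper --- notably the invariance observation $\pi(\delta(T))=\pi(T)$, which the paper leaves implicit but which is indeed needed to handle the $\delta$-transformed children in the a-node case.
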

\begin{proof}
By induction on the depth of $T$. The base is trivial: when we unify some expression with a premise of
a proven assertion, it clearly generates a proof variant. 
Step of induction comes from the definition of SPT for e-nodes: each SPT e-node has at most one direct
descendant.
$\square$
\end{proof}

\begin{theorem}[Correctness]
For any substitution proof tree $T$ with root $(\theta, e)$ the proof tree $\pi(T)$ is a proof of $\theta(e)$.
\end{theorem}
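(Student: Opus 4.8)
The plan is to proceed by induction on the depth of $T$, proving a slightly strengthened statement that covers both kinds of root the construction can produce: for an SPT rooted at an e-node $(\theta,e)$ that $\pi(T)$ is a proof of $\theta(e)$, and for one rooted at an a-node $(\theta,a)$ with $a=\frac{q_1,\ldots,q_n}{q_0}$ that $\pi(T)$ is a proof of $\theta(q_0)$. Throughout, \emph{proof of} an expression $w$ is read in the sense already fixed, namely a proof tree whose root is $w$ and each of whose leaves coincides with a premise of the statement being proved. The one tool I expect to carry the argument is the Monotonicity Lemma, which is exactly what legitimises pushing a newly computed unifier through a subtree already known to be a proof; I will also use the evident fact that, since $\pi$ only forgets the substitution labels and substitution acts component-wise, $\pi(\delta(T_i))=\delta(\pi(T_i))$.

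The base case is the one-node SPT placed on an e-node $e$ because $e$ unifies with a premise $p_i$ via the unifier $\theta$, so that $\theta(e)=p_i$. Then $\pi(T)$ is the single e-node $\theta(e)=p_i$: it contains no transition and is therefore vacuously a proof, its root is $\theta(e)$, and its unique leaf coincides with the premise $p_i$.

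For the inductive step at an a-node, write $T=T_0(T_1,\ldots,T_n)=\frac{\delta(T_1)\ \ldots\ \delta(T_n)}{(\theta,a)}$ with $\delta=\mathrm{mgu}(\theta(T_1),\ldots,\theta(T_n))$ and $\theta=\mathrm{com}(\theta(T_1),\ldots,\theta(T_n))=\delta\circ\theta(T_i)$. By the variable-renaming convention the direct PVT descendants $e_1,\ldots,e_n$ of $a$ are literally its premises $q_1,\ldots,q_n$. The induction hypothesis gives that each $\pi(T_i)$ is a proof of $\theta(T_i)(q_i)$; applying Monotonicity with $\delta$ turns it into $\pi(\delta(T_i))$, a proof of $(\delta\circ\theta(T_i))(q_i)=\theta(q_i)$, where the collapse of all the $\delta\circ\theta(T_i)$ into the single substitution $\theta$ is precisely what choosing the $\mathrm{mgu}$ buys. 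Hence the children of the new root carry the expressions $\theta(q_1),\ldots,\theta(q_n)$, i.e. the premises of $\theta(a)$, so the premise side of the witness equation holds by construction.

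The e-node step $T=\frac{T_1}{(\theta,e)}$ closes the transition: $T_1$ is rooted at an a-node $a$ with $a\succ e$, forked below $e$ because its proposition $q_0$ unifies with $e$, and by hypothesis $\pi(T_1)$ is a proof of $\theta(q_0)$. Since the accumulated $\theta$ refines that forking unifier we get $\theta(q_0)=\theta(e)$, which both makes $\theta(e)$ the new root and supplies the conclusion side of the witness equation $\theta(a)=\frac{\theta(q_1),\ldots,\theta(q_n)}{\theta(e)}$. The one clause that still needs care is that every leaf remains a premise: applying $\delta$ to the subtrees could in principle alter a leaf, but leaves are premises whose variables are flagged non-replaceable and the construction forbids instantiating such variables, so $\delta$ fixes each leaf verbatim. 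I expect this witness bookkeeping across the $\mathrm{mgu}$/$\mathrm{com}$ step — simultaneously checking that $\theta$ unifies all child-root substitutions and that it leaves the non-replaceable premise variables untouched — to be the main obstacle, with Monotonicity doing the remaining work routinely once that agreement is in hand.
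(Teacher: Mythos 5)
Your overall skeleton --- induction on the depth of $T$, the Monotonicity Lemma used to push the freshly computed $\delta$ through the already-proved subtrees, and the identity $\theta=\mathrm{com}(\ldots)=\delta\circ\theta(T_i)$ collapsing the child substitutions into one --- is the same as the paper's. But there is a genuine gap in your a-node step: you assert that ``by the variable-renaming convention the direct PVT descendants $e_1,\ldots,e_n$ of $a$ are literally its premises $q_1,\ldots,q_n$.'' That is not what the construction produces. When $a=\frac{q_1,\ldots,q_n}{q_0}$ is forked below an expression node $e_0$, the fork comes with a unifier $\eta$ satisfying $\eta(q_0)=e_0$, and the descendants are $\eta(q_1),\ldots,\eta(q_n)$. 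Renaming $a$'s variables to fresh ones only prevents clashes; it does not make $\eta$ the identity, since $\eta$ must instantiate $q_0$ to match the concrete goal $e_0$ (fork modus ponens below a concrete conclusion and the premise descendants inherit that conclusion). Consequently your strengthened induction claim for a-node roots --- that $(\theta,a)$ proves $\theta(q_0)$ with $\theta=\mathrm{com}(\ldots)$ --- does not typecheck: $\mathrm{com}$ acts on the variables of the instantiated descendants $\eta(q_i)$, not on the variables of $a$ itself, and the expression actually proved is $\theta(\eta(q_0))=\theta(e_0)$.

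The same omission resurfaces at your e-node step, where you write ``since the accumulated $\theta$ refines that forking unifier we get $\theta(q_0)=\theta(e)$.'' The correct relation is $e=\eta(q_0)$, hence $\theta(e)=(\theta\circ\eta)(q_0)$; the witness of the transition is therefore $\theta\circ\eta$ (the paper's $\eta^{\prime}=\theta_0\circ\eta$), not $\theta$ alone. Without introducing $\eta$ explicitly and verifying both $\eta^{\prime}(q_0)=\theta(e_0)$ and $\eta^{\prime}(q_i)=\delta(\theta(T_i)(e_i))$, the witness equation for the transition --- which is the entire content of ``$\pi(T)$ is a proof'' under the paper's definition --- is never actually checked. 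The repair is exactly the paper's move: carry $\eta$ along and take $\theta\circ\eta$ as the witness. The rest of your argument (the base case, the use of Monotonicity, and your extra observation that $\delta$ fixes the non-replaceable leaf premises, a point the paper defers to the Generality theorem) then goes through.
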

\begin{proof}
Let's prove it by induction on the depth of $T$. 
The base of induction is trivial: we have no obligation on leafs except for them to be e-nodes.

Let's assume that for some assertion node $a=\frac{q_1,\ldots,q_m}{q_0}$ it has a SPT node $T_0$ with
substitution $\theta_0$ and $T_1,\ldots,T_m$ are direct descendants of $T_0$. Let $e_0$ be a 
unique ancestor of $a$ in the PVT. Then, by definition, 
for substitutions $\theta_1,\ldots,\theta_m$, corresponding to $T_1,\ldots,T_m$ we have that the set
$\{\theta_1,\ldots,\theta_m\}$ has a unifier $\delta$. By induction, 
all $T_i$ induce proofs $\pi_1(T_1),\ldots,\pi_m(T_m)$ for expressions $\theta_1(e_1),\ldots,\theta_m(e_m)$. 
By the definition of unifier of substitutions, for all $i,j\leq m$ we have
\[\delta\circ\theta_i=\delta\circ\theta_j=\theta_0\]
Also there is a unifier $\eta$ such that $\eta(q_0)=e_0$ and $\eta(q_j)=e_j$ for all $j\leq m$.
Let's consider a substitution $\eta^{\prime}=\theta_0\circ\eta$:
\[\eta^{\prime}(q_0)=(\theta_0\circ\eta)(q_0)=\theta_0(\eta(q_0))=\theta_0(e_0)\]
\[\eta^{\prime}(q_i)=(\theta_0\circ\eta)(q_i)=\theta_0(\eta(q_i))=(\delta\circ\theta_i)(\eta(q_i))=
(\delta\circ\theta_i)(e_i)=\delta(\theta_i(e_i))\]
By monotonicity lemma $\delta(\pi(s_i))$ will be a proof of $\delta(\theta_i(e_i))$, so $\eta^{\prime}$ is a
witness for the observed transition in the proof. 
$\square$
\end{proof}

\begin{theorem}[Generality]
If the algorithm finds a substitution proof tree $T$ for the root of PVT for some assertion $a=\frac{p_1,\ldots,p_n}{p_0}$, 
then $\pi(T)$ is more general then any proof $\pi^{\prime}$ of $a$, congruent to $\pi(T)$.
\end{theorem}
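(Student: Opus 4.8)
The plan is to prove the Generality theorem by induction on the depth of the substitution proof tree $T$, mirroring the structure used in the Correctness theorem. The goal is to show that the generated proof $\pi(T)$ captures the \emph{most general} instance among all congruent proofs, so I would set up the statement as: for every SPT node with substitution $\theta$, the proof $\pi(T)$ is more general than any congruent proof obtained by a more specialized substitution. Concretely, given an arbitrary proof $\pi'$ with $\pi' \sim \pi(T)$, I must exhibit a substitution $\delta$ with $\pi' = \delta(\pi(T))$, which by the definition of $\succeq$ establishes $\pi(T) \succeq \pi'$.

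First I would handle the base case: a one-node SPT arises from unifying an expression node $e$ with a premise $p_i$ via its unifier $\theta$. Since unification in this framework is non-symmetric and a unifier, when it exists, is unique, any congruent proof must use this same premise, and its witnessing substitution must factor through $\theta$ — so $\theta$ is the most general choice by construction. For the inductive step I would take an a-node $a=\frac{q_1,\ldots,q_m}{q_0}$ with descendants $T_1,\ldots,T_m$ carrying substitutions $\theta_1,\ldots,\theta_m$, and assume inductively that each $\pi(T_i)$ is more general than the corresponding congruent subproof $\pi'_i$ of $\pi'$. The key is that $\pi(T)$ uses $\delta = \mathrm{mgu}(\theta_1,\ldots,\theta_m)$, the \emph{most general} unifier of the accumulated constraints. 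Any congruent proof $\pi'$ must reconcile the same subproofs, hence its witnessing substitution is some unifier $\eta$ of the same set $\{\theta_1,\ldots,\theta_m\}$; by the defining property of the mgu there is a factoring $\eta = \eta' \circ \delta$. Composing this factoring with the inductive specializations of the subtrees yields the single substitution $\delta$ that sends $\pi(T)$ to $\pi'$.

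The main obstacle I expect is the careful bookkeeping of replaceable versus non-replaceable variables together with the fresh-variable renaming convention adopted when forking the PVT. The generality claim only holds \emph{up to substitution} precisely because non-replaceable variables (those inherited from $p_0$) are frozen and cannot be generalized away; I would need to verify that the factoring substitution $\eta'$ never attempts to substitute a non-replaceable variable, and that the fresh renaming of each assertion's free variables does not create spurious incompatibilities across the subtrees. The second delicate point is confirming that the most-general-unifier factoring property composes correctly through the whole tree — that the $\eta'$ obtained at a node can be threaded through the recursively-built specializations $\delta_i$ of the children without conflict, which relies on the subtrees acting on disjoint sets of fresh internal variables. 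Once these variable-scoping facts are established, the mgu factoring property delivers the required global $\delta$ and the induction closes.
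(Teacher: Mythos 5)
Your proposal follows essentially the same route as the paper's proof: induction on the depth of $T$, with the base case settled by the frozen (non-replaceable) variables of the leaf premises and the inductive step closed by the factoring property of $\delta=\mathrm{mgu}(\theta_1,\ldots,\theta_m)$ applied to the specializations supplied by the inductive hypothesis, made compatible by taking disjoint variable copies of the subtrees. The only minor imprecision is that the unifier of $\{\theta_1,\ldots,\theta_m\}$ that factors through $\delta$ is the inductively obtained specialization $\varepsilon$ (derived from the chain $\theta'=\varepsilon\circ\theta_i\circ\theta$), not the witnessing substitution of the transition in $\pi'$ itself, but this is exactly the bookkeeping the paper's chain of equations carries out.
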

\begin{proof}
By the correctness lemma we have that $\pi(T)$ is a proof. By the construction of leaf nodes of $T$,
for each leaf node $e$ from $T$ there is a premise $p_i$ such that $e=p_i$, and the root of the tree $T$
is $(\emptyset, p_0)$, because all variables in $p_0$ are fixed and cannot be substituted with. So
$\pi(T)$ is the proof of $a$.

Now let's check that $\pi(T)$ is a most general. 
Let $\pi^{\prime}$ be another proof of $a$, congruent to $\pi=\pi(T)$.
We prove, that $\pi(T)\succeq \pi^{\prime}$ by induction on the depth of $T$. 
The base of induction is obvious, because leaf nodes of $T$ do not have replaceable variables, 
therefore they are the same for $\pi$ and $\pi^{\prime}$. 
The step of induction. Let's consider some transition in proofs $\pi$ and $\pi^{\prime}$ with
assertion $b=\frac{q_1,\ldots,q_m}{q_0}$ and corresponding expressions 
$e_0, e_1, \ldots, e_m$ and $e^{\prime}_0, e^{\prime}_1, \ldots, e^{\prime}_m$ from $\pi$ and
$\pi^{\prime}$ accordingly. Let $\theta$ and $\theta^{\prime}$ be substitutions, which give $e_i$ 
and $e^{\prime}_i$ from $q_i$ correspondingly. 

By induction, for each $1\leq i \leq 0$ there is a substitution $\varepsilon_i$ such that 
$e^{\prime}_i = \varepsilon_i(e_i)$. Taking copies of the corresponding subtrees with
disjoint sets of variables, we can assume, that all of these substitutions the same,
i.e. $e^{\prime}_i = \varepsilon(e_i)$. By construction of SPT, if we consider all direct SPT-descendants 
$T_1,\ldots, T_m$ of the root of $T$, and their root expressions $e^{\prime\prime}_i$, then 
for the corresponding substitutions $\theta_i$ we'll have a most general unifier
$\delta=\mathrm{mgu}(\theta_1,\ldots,\theta_m)$ and $\theta_0=\mathrm{com}(\theta_1,\ldots,\theta_m)$. 

Now let's write a chain of equations for all $0<i\leq m$:
\[ \theta^{\prime}(q_i)=e^{\prime}_i=\varepsilon(e_i)=\varepsilon(\theta_i(e^{\prime\prime}_i))=
   \varepsilon(\theta_i(\theta(q_i)))= (\varepsilon\circ\theta_i\circ\theta)(q_i)\]

From here we conclude, that $\theta^{\prime}=\varepsilon\circ\theta_i\circ\theta$. Then

\[\varepsilon\circ\theta_i=\varepsilon\circ\theta_j\mbox{,\ \ \  for all }0<i,j\leq m\]
 
Now let's 
recall, that $\delta$ is a most general unifier for $\theta_1,\ldots,\theta_m$, so there 
exists such $\varepsilon^{\prime}$ that $\varepsilon^{\prime}\circ\delta\circ\theta_i=\varepsilon\circ\theta_i$ 
for all $0<i\leq m$, and we get
\[ \theta^{\prime}=\varepsilon^{\prime}\circ\underbrace{\delta\circ\theta_i}_{\theta_0}\circ\theta_i\]
so \[ \theta^{\prime}=\varepsilon^{\prime}\circ\theta_0\circ\theta \]
The only thing, which is left to see that the statement of the induction step holds, is to notice
that $e^{\prime}_0=\theta^{\prime}(p_0)=\varepsilon^{\prime}(\theta_0(\theta(p_0)))=
\varepsilon^{\prime}(e_0))$. $\square$
\end{proof}

\begin{theorem}[Completeness]
If a statement $\frac{p_1,\ldots, p_n}{p_0}$ is has a proof $\pi$, then the set of SPT for the 
root $p_0$ at some moment of building PVT will contain some tree $T$ such that $\pi=\theta(\pi(T))$, 
for some substitution $\theta$.
\end{theorem}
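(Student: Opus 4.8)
The plan is to obtain completeness as an \emph{existence} statement that then feeds into the already-proved Generality theorem. Concretely, I would split the argument into two parts: first, that the congruence class of the given proof $\pi$ is present in the PVT as a proof variant; second, that the bottom-up construction of SPTs actually succeeds along this structure and never gets stuck, so that some $T$ eventually appears in $s(p_0)$. Once such a $T$ is found, the Generality theorem closes the argument at once: $\pi(T)$ is a proof of the statement congruent to $\pi$, hence $\pi(T)\succeq\pi$, which is precisely the claim $\pi=\theta(\pi(T))$ for some substitution $\theta$. So the real work is showing the construction does produce a root SPT.

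For the first part I would argue by induction going down from the root $p_0$. Since $\pi$ is a proof, every transition from $e_1,\ldots,e_m$ through an a-node $a=\frac{q_1,\ldots,q_m}{q_0}$ to $e_0$ satisfies $\frac{e_1,\ldots,e_m}{e_0}=\theta_\pi(a)$ for a witness $\theta_\pi$; in particular $e_0=\theta_\pi(q_0)$, so the proposition of $a$ unifies with $e_0$ and $a$ is among the assertions the PVT forks out below the node carrying $e_0$. The PVT children are the expressions $\theta(q_1),\ldots,\theta(q_m)$ formed with the unique (most general) unifier $\theta$ and fresh variables, whereas $\pi$'s children $e_i=\theta_\pi(q_i)$ are substitution instances of them. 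Iterating down the finite tree, the whole of $\pi$ is realized, up to substitution, by a proof variant $v$ of the PVT with the same graph structure and the same assertions, i.e. $\pi(v)\sim\pi$.

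For the second part I would induct on the depth of $\pi$ and track that the required SPTs are created. At a leaf the expression coincides with a premise $p_i$, and the existence of this leaf in $\pi$ witnesses exactly the unifiability the algorithm tests, so the corresponding one-node SPT is inserted into the set of that e-node. At an a-node $a$ with PVT children $\bar e_1,\ldots,\bar e_m$, the induction hypothesis supplies SPTs $T_i\in s(\bar e_i)$ matching the subproofs of $\pi$. The crucial point is that $\{\theta(T_1),\ldots,\theta(T_m)\}$ is unifiable: the witness of $a$ in $\pi$ together with the substitutions realizing $\pi$'s subproofs exhibits a common substitution, and since \emph{a} unifier exists, $\mathrm{mgu}(\theta(T_1),\ldots,\theta(T_m))$ exists by the unification properties stated earlier. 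Hence the construction yields a new $T_0\in s(a)$, which is then pushed one step toward the root as an SPT of the ancestor e-node. Propagating this through the finitely many levels of $\pi$'s structure shows that $s(p_0)$ eventually contains a tree $T$; as the PVT forks out every applicable assertion and the relevant substructure is finite, this occurs at some finite moment of building the PVT.

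I expect the main obstacle to be the inductive step of the second part, specifically verifying that $\{\theta(T_1),\ldots,\theta(T_m)\}$ is unifiable. This is where the mere existence of $\pi$ must be converted into an explicit common substitution, and it needs the same careful bookkeeping of variable freshness, renaming, and composition that appears in the Generality proof — in particular, aligning the per-child witnesses of $\pi$, which may instantiate replaceable variables differently, into a single substitution by passing to subtree copies with disjoint variable sets. A secondary subtlety is the precise direction of the leaf-level unification between a PVT expression and a premise; once unifiability is secured at each internal node, existence of the most general unifier is automatic, and the remaining identities are routine and essentially dual to those already carried out in the Generality proof.
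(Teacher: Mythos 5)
Your proposal follows essentially the same route as the paper's own proof: reduce the claim to exhibiting, at some finite stage, an SPT in $s(p_0)$ congruent to $\pi$, and then invoke the Generality theorem to obtain $\pi(T)\succeq\pi$. The paper dispatches the existence of that congruent SPT in one sentence ("it is clear from the character of the algorithm"), whereas you correctly identify and sketch the real work — propagating unifiability of $\{\theta(T_1),\ldots,\theta(T_m)\}$ up from the witnesses of $\pi$ — so your version is, if anything, more complete than the original.
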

\begin{proof}
By previous theorem it is sufficient to show, that at some moment, the set $s(p_0)$ will contain a
SPT, congruent to $\pi$. But it is clear from the character of the algorithm: at each step
of expansion of PVT, we use \emph{all possible} variants of expansion (limited by demand of unification), 
so at some moment we'll obtain all nodes, corresponding to the proof $\pi$.
$\square$
\end{proof}

\begin{corollary}
The set of all provable assertions in any pure deductive system is computably enumerable.
\end{corollary}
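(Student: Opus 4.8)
The plan is to extract from the Completeness and Correctness theorems an effective semi-decision procedure for provability, and then to dovetail it across all assertions. Recall that a set is computably enumerable precisely when it is the set of inputs accepted by some effective procedure, and that running such a procedure on a computable enumeration of candidates — dovetailing over both the candidates and the computation stages — produces a genuine enumeration of the accepted set. So it suffices to do two things: (i) enumerate all assertions effectively, and (ii) give an effective procedure that halts on exactly the provable ones.

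For (i), each assertion $a=\frac{p_1,\ldots,p_n}{p_0}$ is a finite tuple of expressions of $L(G)$; since $G$ is a context-free grammar, membership in $L(G)$ is decidable, so the set of all assertions is decidable and in particular admits a computable enumeration $a_1,a_2,\ldots$. For (ii), I would fix an assertion $a=\frac{p_1,\ldots,p_n}{p_0}$ and run the PVT-building algorithm with goal $p_0$ (all its variables marked non-replaceable) and premises $p_1,\ldots,p_n$. After each expansion step, inspect $s(p_0)$, and halt and accept as soon as it is non-empty. By the Completeness theorem, whenever $a$ is provable some finite stage places a tree $T$ in $s(p_0)$; by the argument used in the Generality theorem its root is then $(\emptyset,p_0)$, and by the Correctness theorem $\pi(T)$ is a genuine proof of $a$. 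Conversely, Correctness guarantees that any $T\in s(p_0)$ witnesses provability, so the procedure accepts $a$ if and only if $a$ is provable. Combining this with (i) and dovetailing yields the enumeration.

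I expect the main obstacle to be effectivity of each step of the search, rather than the logical core, which the two theorems hand to us. Concretely, one must verify that unification and $\mathrm{mgu}$ are computable — which is exactly what the context-free, unambiguous grammar and the resulting uniqueness of unifiers buy us — and that the branching at every e-node is effectively generated: an e-node $e$ forks into the a-nodes whose propositions unify with $e$, so if the axiom set $A$ is finite this is a finite, computable fork, and if $A$ is only computably enumerable one additionally dovetails the fork over an enumeration of $A$ (this is the implicit hypothesis the phrase ``pure deductive system'' must carry, since for an arbitrary non-enumerable $A$ the statement would fail). Granting this, every finite prefix of the PVT, together with all the associated sets $s(\cdot)$, is computed by finitely many unification and $\mathrm{mgu}$ calls, so the procedure above is a bona fide effective semi-decision procedure and the corollary follows.
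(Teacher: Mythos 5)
Your proposal is correct and is essentially the argument the paper leaves implicit: the paper states this corollary without proof, treating it as an immediate consequence of the Correctness and Completeness theorems, which is exactly the semi-decision-plus-dovetailing reading you spell out. Your added remarks on the effectivity of unification, of $\mathrm{mgu}$, and of the branching over $A$ (requiring $A$ to be at least computably enumerable) are sound and in fact make explicit hypotheses the paper glosses over.
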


As it already was mentioned, the algorithm has two different aspects: bottom-up and top-down.
The bottom-up procedure (building of a PVT) is quite straightforward. The other one, 
top-down (building SPT's), is more sophisticated, and unification of substitutions is a crucial part of it.
The inverse method, mentioned above, also uses analogical procedure, but, surprisingly,
the unification of substitutions (or, as it called in \cite{Chang} a \textit{combination} of substitutions) 
is not stressed as the main operation, but, rather, another complex transformations of formulas
are considered not less important.

\section{Conclusion and Future Work}
The algorithm, presented in this paper, is the most general proof search algorithm, which one may ever
hope to elaborate. The generality of the underlying formal system in almost maximal, because in comparison 
with the general notion of Post canonical system is restricted only by the language: it has to be context 
free and unambiguous. There's no other
constraints like subformula property, which is vital for the inverse method. And the restriction of grammar
to context-free and unambiguous class is natural: if you don't impose it, then there immediately arises a question
about unification algorithms for the language which is used.

The other good thing about the method, presented in this paper, is that it is completely ready for use out-of-the-box, 
and you don't need to 'cook' \cite{Voronkov} 
a considered logic in order to use it - just write down expression language, axioms and inference rules and
you may feed the assertion of interest to a prover engine, which, in theory, will find a proof (if it exists).

What is left out of scope of this article is unification problem. From the algorithm description it is
clear, that efficient unification algorithms are vital for the implementation of this method. And efficient
unification of an expression with a (potentially huge) set of assertions is usualy done with indexing 
\cite{Voronkov-indexing} and is not trivial. 
The algorithm of unification for substitutions is even more complex and challenging. Experiments on proving a rather simple 
statement in classical propositional Hilbert-style logic showed, that the number of SPT trees may grow extremely fast.
Just to feel the scale of this problem imagine, that we have an assertion with 5 premises (common case in Metamath theorem
base), each of which has a non-empty set of SPT, having, for example 1, 10, 100, 10 and 100 elements respectively. 
Then we need to check $10^6$ substitution tuples for unification. Fortunately, it is experimentally found, 
that almost all of this tuples do not unify, so we'll end up with something like ~500 (or even 0) of solutions, 
but still, checking all of these $10^6$ variants consequently is not affordable in practice. Efficient algorithm for such
massive substitution unification was developed, but it needs a thorough analysis and separate research.

The other thing, which is intentionally missed in this paper, is treatment of proper substitutions for 
disjointed variables. Classical predicate calculus has special restrictions on substitutions, which may be
applied to specific rules of inference (like introduction of $\forall$-quantifier). In Metamath such restrictions
are simplified, but still are essentially a restriction on application of particular substitutions. Addition 
of such restrictions doesn't change the general scheme of algorithm, the only thing, which is necessary 
to track during traversing of PVT are these restrictions, which are not difficult to check. So, in order to 
keep text more simple and clear we decided to skip this details.

The problematic part of practical implementation of such method is computational complexity. The strong side of this 
method is its universality and ability to apply to the wide variety of calculi. And, as always, this universality
causes problems. For example, we cannot rely on good properties of a considered deductive system:
it may have a cut-like rule(s), no subformula property, etc. In practice this leads to the enormous
growth of a search space while searching for a proof. The only way to cope with such combinatorial explosion is 
to use smart heuristics, which will lead search in the right direction.
The author's strong belief is that the advanced methods of machine learning, based on the analysis
of an already formalized proofs, may help to develop such methods.

\end{document}